\newtheorem{thm}{Theorem}
\newtheorem*{Lem}{Lemma}
\def\F{{\mathcal F}}
\def\real{{\mathbb R}}
\begin{document}
\title[Convolution estimates and the Gross-Pitaevskii hierarchy]
{Convolution estimates and the Gross-Pitaevskii hierarchy}
\author{William Beckner}
\address{Department of Mathematics, The University of Texas at Austin,
1 University Station C1200, Austin TX 78712-0257 USA}
\email{beckner@math.utexas.edu}
\begin{abstract}
Extensions to higher-dimensions are given for a convolution estimate used by Klainerman
and Machedon in their study of uniqueness of solutions for the Gross-Pitaevskii hierarchy.
Such estimates determine more general forms of Stein-Weiss integrals involving restriction
to smooth submanifolds. 
\end{abstract}
\maketitle

Analysis of the Gross-Pitaevskii hierarchy has led to the development and 
application of functional analytic mappings  for the rigorous description of many-body
interactions in quantum dynamics. 
In their formative and influential paper on uniqueness of solutions for 
the Gross-Pitaevskii hierarchy, Klainerman and Machedon determine 
uniform bounds for a three-dimensional convolution integral.
The idea of their argument rests on an extension of the classical convolution 
for Riesz potentials
$$\int_S \frac1{|w-g|^\lambda}\ \frac1{|y|^\mu}\, d\sigma$$
where $S$ is a smooth submanifold in $\real^n$, $w \in \real^m$ 
and the objective is to bound the size of the integral by an inverse power of 
$|w|$ under suitable conditions on $\lambda$ and $\mu$.
Such an estimate can be viewed as a step in the larger and dual program for 
understanding how smoothness controls restriction to  a non-linear sub-variety 
(see \cite{Beckner-MRL}). 
Two natural extensions to higher dimensions are suggested here:
\begin{gather}
|w|^2 \int_{\real^n\times\cdots\times\real^n} 
\delta \Big[ \tau + \mathop{{\sum}'} |x_k|^2 - |x_n|^2\Big] 
\delta \Big(w-\sum x_k\Big) \prod |x_k|^{-(n-1)} 
dx_1\cdots dx_n \label{eq:natexten1}\\
\noalign{\vskip6pt}
|w|^{n-1} \int_{\real^n\times\real^n\times\real^n} 
\delta \left[ \tau + |z|^2 + |x|^2 - |y|^2\right] 
\delta (w-x-y-z) 
\big[ |z|\, |x|\, |y|\big]^{-(n-1)} dx\,dy\,dz \label{eq:natexten2}
\end{gather}
with the objective being to determine uniform bounds in terms of the variables $\tau>0$
and $w\in \real^n$ with $n>1$ (here the prime on the symbol for sum,  product 
or sequence  
indicates that the last term should be dropped). 
From  the dilation character of the expression, one can use ``uniformity'' to 
eliminate one variable so it suffices to consider $\tau = 1$:
\begin{gather*}
\Lambda_n (w) = |w|^2 \int_{\real^n\times\cdots\times\real^n} 
\delta \Big[ 1+\mathop{{\sum}'}   |x_k|^2 - |x_n|^2\Big] 
\delta \Big(w-\sum x_k\Big) \prod |x_k|^{-(n-1)} dx_1\cdots dx_n\\
\noalign{\vskip6pt}
\Delta_n (w) = |w|^{n-1} \int_{\real^n\times\real^n\times\real^n} 
\delta \left[ 1+|z|^2 + |x|^2 - |y|^2\right] 
\delta (w-x-y-z) \big[ |x|\, |y|\, |z|\big]^{-(n-1)} dx\, dy\, dz
\end{gather*}
One observes that the first expression is an extension of the classical 
convolution form 
$$(g* f_1 * \cdots * f_n) (w)\ ,\quad 
g\in L^1 (\real^n)\ ,\quad 
f_k \in L^{n/(n-1)} (\real^n)$$
which is uniformly continuous  and in the class $C_0 (\real^n)$ using the 
Riemann-Lebesgue lemma.
Here the convolution for Lebesgue classes is replaced by Riesz potentials,
but the multivariable integration is constrained to be on a hyperbolic 
surface invariant under action by the indefinite orthogonal group.

\begin{thm}\label{thm-Lambda}
$\Lambda_n(w)$ is bounded for $n\ge 3$;
$\Delta_n (w)$ is bounded for $n\ge 2$.
\end{thm}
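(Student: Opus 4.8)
The plan is to put $\Lambda_n$ and $\Delta_n$ on the same footing. With the normalization $\tau=1$ already made, I would use the convolution $\delta$ to eliminate one variable — $x_n=w-(x_1+\cdots+x_{n-1})$ in $\Lambda_n$ and $y=w-x-z$ in $\Delta_n$. On the quadratic constraint one has $|x_n|^2=1+|x_1|^2+\cdots+|x_{n-1}|^2\ge1$ and $|y|^2=1+|z|^2+|x|^2\ge1$, so the eliminated Riesz factor is $\le1$ and carries no singularity. What remains is to bound $|w|^2$ (resp.\ $|w|^{n-1}$) times the integral, against the measure induced by $\delta[G_w]$ over a quadric hypersurface $\Sigma_w=\{G_w=0\}\subset(\real^n)^{n-1}$ (resp.\ $\subset\real^n\times\real^n$), of $|x_1|^{-(n-1)}\cdots|x_{n-1}|^{-(n-1)}$ (resp.\ of $|x|^{-(n-1)}|z|^{-(n-1)}$); here $G_w$ is an explicit quadratic whose coefficients depend on $w$, and a short computation shows its gradient has no zero on $\Sigma_w$ — in fact $|\nabla G_w|$ is bounded below there, uniformly once $w$ is kept away from $0$ — so the coarea formula converts freely between the surface measure and a Lebesgue integral over a graphing chart. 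The bound is then a Stein--Weiss estimate for such Riesz-weighted integrals restricted to a quadric, which I would organize through a dyadic decomposition in $|w|$ and in the sizes of the variables.

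For $|w|\le\tfrac12$ the surface $\Sigma_w$ stays a fixed distance from the common zero of all the singular weights (near the origin $G_w$ equals $1-|w|^2$ plus higher-order terms, hence stays positive there), so the weight product is bounded on the relevant part of $\Sigma_w$ and the prefactor only helps; the contribution of large values of the variables is absorbed by the dyadic tail estimate below. For $|w|\ge2$ I would rescale $x_k=|w|\xi_k$ (and likewise $x,z$): the convolution constraint becomes $\sum\xi_k=w/|w|$, $\Sigma_w$ becomes a perturbation — by the small parameter $|w|^{-2}$ — of a cone whose vertex sits at unit distance from the origin, and the homogeneities of $\prod dx_k$, of the weight product and of $\delta[G_w]$ combine to produce exactly the factor $|w|^{-2}$ (resp.\ $|w|^{-(n-1)}$) that cancels the prefactor. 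It then suffices to bound the rescaled integral uniformly in $|w|^{-2}$; since the rescaled cone still avoids the origin, this reduces to controlling, at each dyadic scale $R$, the shell $|x_1|\sim\cdots\sim|x_n|\sim R$, on which $\Sigma_w$ carries measure $\sim R^{\dim\Sigma_w}$, the coarea Jacobian $|\nabla G_w|$ is $\sim R$, and the weight product is $\sim R^{-n(n-1)}$ for $\Lambda_n$ and $\sim R^{-3(n-1)}$ for $\Delta_n$; the resulting geometric series converges — for $\Delta_n$ — exactly when $n\ge2$, and for $\Lambda_n$ for every $n$. So at infinity $\Lambda_n$ is never the obstruction, while $\Delta_n$ is admissible precisely down to $n=2$.

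The remaining band $\tfrac12\le|w|\le2$ is the decisive one. At $|w|=1$ the quadric $\Sigma_w$ passes through the single configuration at which every singular weight is simultaneously infinite — $x_1=\cdots=x_{n-1}=0$ with $x_n=w$ for $\Lambda_n$, and $x=z=0$ with $y=w$ for $\Delta_n$ — and near it $\Sigma_w$ is, to leading order, the hyperplane $w\cdot(x_1+\cdots+x_{n-1})=0$ of dimension $n(n-1)-1$ (resp.\ $w\cdot(x+z)=0$ of dimension $2n-1$). The question is whether the weight product, of total Riesz order $(n-1)^2$ for $\Lambda_n$ and $2(n-1)$ for $\Delta_n$, is integrable on that hyperplane at the origin; writing the hyperplane as a graph over the full coordinate blocks and applying the weighted Hardy--Littlewood--Sobolev (Stein--Weiss) inequality one block at a time shows that it is, precisely when $(n-1)^2<n(n-1)-1$, i.e.\ $n\ge3$, for $\Lambda_n$, while for $\Delta_n$ the corresponding inequality $2(n-1)<2n-1$ is automatic. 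Dominating the integral for a general $w$ in the band by this extremal case at $|w|=1$ then yields the uniform bound. This corner estimate for $\Lambda_n$ is the step I expect to be the main obstacle: one has to combine the single dimension gained from the surface restriction with an iterated weighted inequality while keeping control of every intermediate sub-corner $x_k\to0$; and the threshold is genuine, since for $n=2$ the hyperplane passes through the common block-origin and $\Lambda_2$ acquires a logarithmic divergence at $|w|=1$.

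Assembling the three ranges of $|w|$ gives the theorem: for $\Delta_n$ the corner is automatically harmless and all the work lies in the dyadic tail at infinity, admissible for $n\ge2$; for $\Lambda_n$ the tail is always harmless and the binding constraint is the corner, which closes for $n\ge3$.
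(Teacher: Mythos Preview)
Your strategy --- split by the size of $|w|$, handle the extremes by scaling and dyadic summation, and treat the band $|w|\sim1$ by local analysis of the quadric near the joint corner --- is a natural geometric attack, but the step you yourself flag as ``the main obstacle'' is a genuine gap rather than a formality. The dimensional count $(n-1)^2<n(n-1)-1$ is only a necessary condition for local integrability of $\prod|x_k|^{-(n-1)}$ on the tangent hyperplane; one must control every sub-corner where a proper subset of the $x_k$ vanish, and several of those are individually borderline. Already for $\Delta_n$, on the hyperplane $w\cdot(x+z)=0$ the inner integral in $x$ over the slice $w\cdot x=-w\cdot z$ diverges logarithmically as $w\cdot z\to0$, and is saved only after the outer $z$-integration absorbs the log; for $\Lambda_n$ with $n-1$ blocks the stratified bookkeeping is substantially heavier. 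You also assert without argument that the band is ``dominated by the extremal case $|w|=1$'', but for $|w|\ne1$ the quadric misses the joint origin while approaching it, and turning the pointwise corner estimate at $|w|=1$ into a \emph{uniform} bound for nearby $w$ is exactly where the difficulty lives. None of this is clearly false, but it is the heart of the proof and you have not supplied it.

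The paper bypasses the corner analysis entirely by a two-stage dimension reduction. First it reduces $\Lambda_n$ to $\Delta_n$: separating out the inner three variables gives a copy of $\Delta_n$ (at a shifted $\tau$ and shifted argument), and after bounding that by $\sup\Delta_n$ what remains is an exact $(n-2)$-fold Riesz convolution $\bigl(|x|^{-(n-1)}\bigr)^{*(n-2)}(w)$, which by the Fourier calculus equals a constant times $|w|^{-2}$ and cancels the prefactor identically. Second it reduces $\Delta_n$ to $\Delta_2$ by integrating out the non-polar angular variables and using the pointwise bound $\dfrac{|w|^2|z-\xi|\,|z|}{1+|w|^2(|z-\xi|^2+|z|^2)}\le\tfrac12$. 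The only hard estimate left is then the two-dimensional $\Delta_2$, which is handled by an explicit hypergeometric computation followed by a one-variable symmetric-decreasing rearrangement. What this buys is that all the multilinear singular structure is absorbed into an exact Riesz identity, and the genuine analysis collapses to a single planar integral; your route, if the corner step can be closed, would be more flexible with respect to the particular exponents and surfaces, but at the cost of a delicate stratified singular-integral argument that the paper never needs.
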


The argument for the proof of   Theorem~\ref{thm-Lambda} will be developed 
in several steps and will be reduced to the second statement when the 
dimension is at least four.
Note that $\Lambda_3 = \Delta_3$, and this is the case determined by Klainerman
and Machedon.

\begin{proof} 
{\sc Step 1:} for $n=2$, $\Lambda_2 (w)$ is unbounded. 
This case is instructive and will identify the method used later in the proof of the
second part. 
\begin{align*}
\Lambda_2 (w) & = |w|^2 \int_{\real^2} \delta \left( 1+|w-y|^2 - |y|^2\right) 
\frac1{|w-y|}\ \frac1{|y|}\,dy\\
\noalign{\vskip6pt}
& = |w|^2 \int_1^\infty \int_{-\pi/2}^{\pi/2} \delta 
\left[ 1+|w|^2 - 2rw\cos \theta\right] 
\frac1{\sqrt{r^2-1}}\, dr\,d\theta
\end{align*}
(since $\cos\theta$ must be positive and $|y| >1$)
\begin{align*}
& = 2|w|^2 \int_0^1 \frac1{\sqrt{1-u^2}}\ 
\frac1{\sqrt{(1+|w|^2)^2 - 4|w|^2 u^2}}\, du\\
\noalign{\vskip6pt}
& = \frac{|w|^2}{1+|w|^2} \int_0^1 \frac1{\sqrt{u}}\ \frac1{\sqrt{1-u}}\ 
\frac1{\sqrt{1-\beta u}}\, du\ ,\qquad 
\beta = \frac{4|w|^2}{(1+|w|^2)^2} \le 1\\
\noalign{\vskip6pt}
& = \pi \frac{|w|^2}{1+|w|^2}  \ F\left(\frac12,\frac12; 1;\beta\right) 
=  \frac{2|w|^2}{1+|w|^2}\ K(\sqrt{\beta}\,)
\end{align*}
where $F$ denotes the hypergeometric function and $K$ the complete elliptic 
integral. 
$\Lambda_2 (w) = \infty$ for any  $w$ on the unit sphere $|w|=1$.
Observe that for $\beta \simeq 1$ (e.g., $|w|\simeq1$) 
$$\Lambda_2 (w) \simeq  -  \ln \left( \sqrt{1-\beta}/2\right)$$

A similar calculation will now give:

\begin{Lem}
For $n=2$ and $\frac12 <\alpha <1$
$$\Lambda_{2,\alpha} (w) = |w|^{2\alpha} \int_{\real^2} \delta 
\left( 1+|w-y|^2 - |y|^2\right) |w-y|^{-\alpha} |y|^{-\alpha}\,dy$$
is uniformly bounded in $w$.
\end{Lem}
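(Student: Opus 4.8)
The plan is to run the reduction of Step~1 again, this time carrying the exponent $\alpha$ instead of $1$, and to watch the hypotheses $\alpha>\tfrac12$ and $\alpha<1$ emerge at the two endpoints of the resulting one-dimensional integral. First I would use the delta to integrate out the radial variable: writing $y=r(\cos\theta,\sin\theta)$ with polar axis along $w$, the constraint $1+|w-y|^2-|y|^2=0$ becomes $\delta[\,1+|w|^2-2r|w|\cos\theta\,]$, so only $\cos\theta>0$ contributes and on the support $|w-y|^2=|y|^2-1=r^2-1$. The delta fires at $r_0=(1+|w|^2)/(2|w|\cos\theta)$, which exceeds $1$ automatically by the arithmetic--geometric mean inequality, with Jacobian $1/(2|w|\cos\theta)$; substituting $|y|=r_0$, $|w-y|=\sqrt{r_0^2-1}$ and collecting the powers of $2|w|\cos\theta$ and of $1+|w|^2$ leaves a single angular integral.

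Cleaning this up by $u=\cos\theta$ and then $v=u^2$ should produce the Euler-type form
$$\Lambda_{2,\alpha}(w)=2^{2\alpha-2}\Big(\frac{|w|^2}{1+|w|^2}\Big)^{2\alpha-1}\int_0^1 v^{\alpha-3/2}(1-v)^{-1/2}(1-\beta v)^{-\alpha/2}\,dv ,\qquad \beta=\frac{4|w|^2}{(1+|w|^2)^2}\le 1 .$$
In the notation of Step~1 the $v$-integral equals $B(\alpha-\tfrac12,\tfrac12)\,F(\tfrac\alpha2,\alpha-\tfrac12;\alpha;\beta)$, and the case $\alpha=1$ recovers the logarithmically divergent $F(\tfrac12,\tfrac12;1;\beta)$ already encountered, so the passage to $\alpha<1$ is exactly what is needed.

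For the uniform bound it is cleanest to avoid hypergeometric asymptotics: since $0\le\beta\le1$ and $0\le v\le1$ one has $1-\beta v\ge 1-v$, hence $(1-\beta v)^{-\alpha/2}\le(1-v)^{-\alpha/2}$, and the $v$-integral is dominated, uniformly in $w$, by
$$\int_0^1 v^{\alpha-3/2}(1-v)^{-(1+\alpha)/2}\,dv=B\!\Big(\alpha-\tfrac12,\tfrac{1-\alpha}{2}\Big),$$
which is finite precisely because $\alpha>\tfrac12$ tames the singularity at $v=0$ while $\alpha<1$ tames the one at $v=1$. The prefactor $\big(|w|^2/(1+|w|^2)\big)^{2\alpha-1}$ is at most $1$ for $\alpha>\tfrac12$, so $\Lambda_{2,\alpha}(w)\le 2^{2\alpha-2}B(\alpha-\tfrac12,\tfrac{1-\alpha}{2})$ for all $w$.

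The only genuine work is the bookkeeping in the first two steps --- tracking the powers of $\cos\theta$, $|w|$ and $1+|w|^2$ through the radial integration and the two substitutions; once the integral is in Euler form the uniform bound is immediate, and the role of $\tfrac12<\alpha<1$ (it makes both endpoint exponents subcritical, unlike the borderline $\alpha=1$ of $\Lambda_2$) is transparent.
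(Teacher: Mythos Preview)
Your proposal is correct and follows essentially the same route as the paper: the paper indicates the Lemma by ``a similar calculation'' to Step~1 and carries it out in the proof of Theorem~\ref{thm-uniform-bd} (noting $\Lambda_{2,\alpha}=\Theta_{2,\alpha}$), arriving at precisely your Euler integral $2^{2\alpha-2}\bigl(|w|^2/(1+|w|^2)\bigr)^{2\alpha-1}\int_0^1 v^{\alpha-3/2}(1-v)^{-1/2}(1-\beta v)^{-\alpha/2}\,dv$ and bounding it via the same inequality $(1-\beta v)^{-\alpha/2}\le(1-v)^{-\alpha/2}$ to obtain $B(\alpha-\tfrac12,\tfrac{1-\alpha}{2})$.
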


\noindent 
Observe that by dilation symmetry this result is equivalent to uniform 
boundedness with $\tau >0$, $w\in \real^2$ for 
$$|w|^{2\alpha} \int_{\real^2}\delta \left( \tau + |w-y|^2 - |y|^2\right) 
|w-y|^{-\alpha} |y|^{-\alpha}\,dy\ .$$
\medskip

\noindent {\sc Step 2:}
let $n\ge 4$; then using the second delta function for the variable $x_{n-2}$
\begin{align*}
\Lambda_n (w) 
& = |w|^2 \int_{\real^n\times\cdots\times\real^n} 
\delta \bigg[ 1+\sum^{n-3} |x_k|^2 + \Big|  w- \sum^{n-3} x_k 
- x_{n-1} - x_n\Big|^2 + |x_{n-1}|^2 - |x_n|^2 \bigg]\ \times \\
\noalign{\vskip6pt}
& \prod_{k=1}^{n-3} |x_k|^{-(n-1)} \Big|  w- \sum^{n-3}
x_k - x_{n-1} - x_n\Big|^{-(n-1)} 
\Big( |x_{n-1}|\,    |x_n|\Big)^{-(n-1)} 
dx_1 \cdots dx_{n-3} \,dx_{n-1} \,dx_n\\
\noalign{\vskip6pt}
& = |w|^2 \int_{\underbrace{\real^n\times\cdots\times \real^n}_{(n-3)\text{ copies}}}
\prod^{n-3} |x_k|^{-(n-1)} \Big| w-\sum^{n-3} x_k\Big|^{-(n-1)}\ \times \\
\noalign{\vskip6pt}
&\left[ \Big| w-\sum^{n-3} x_k\Big|^{n-1} 
\int_{\real^n\times \real^n} 
\delta \Big[ 1+\sum |x_k|^2 + \Big| w-\sum x_k-x-y\Big|^2 
+ |x|^2 - |y|^2\Big] \  \right.\times  \\
\noalign{\vskip6pt}
&\qquad  \left[ \Big| w-\sum x_k -x-y\Big|  |x|\, |y|\right]^{-(n-1)} dx\,dy \biggr]
dx_1\ldots dx_{n-3}\\
\noalign{\vskip6pt}
&\le c_n |w|^2 \int_{\real^n\times\cdots\times \real^n} 
\prod^{n-3} |x_k|^{-(n-1)} \Big| w-\sum^{n-3} x_k\Big|^{-(n-1)} 
dx_1 \ldots dx_{n-3}
\end{align*}
where 
$$c_n = \sup_{\tau,v} |v|^{n-1}\int_{\real^n\times \real^n} 
\delta \Big[ \tau + |v-x-y|^2 + |x|^2 - |y|^2\Big]
\Big[ |v-x-y|\, |x|\, |y|\Big]^{-(n-1)} dx\,dy 
= \sup_w \Delta_n (w)$$
where in the earlier expression, $\tau = 1+\sum^{n-3} |x_k|^2$ and 
$v = w- \sum^{n-3} x_k$. 
Then 
$$\Lambda_n (w) \le c_n |w|^2 
\int_{\real^n\times \cdots \times \real^n} 
\prod |x_k|^{-(n-1)} \Big| w-\sum x_k\Big|^{-(n-1)} 
dx_1 \ldots dx_{n-3}$$
Using the following notation for the Fourier transform and its action on Riesz 
potentials
\begin{align*}
(\F f)(x) 
& = \int_{\real^n} e^{2\pi ixy} f(y)\,dy \\
\noalign{\vskip6pt}
\F \Big[ |x|^{-\lambda}\Big](\xi) 
& = \pi^{-n/2 + \lambda} \frac{\Gamma (\frac{n-\lambda}2)}{\Gamma (\frac{\lambda}2)}
\ |\xi|^{-(n-\lambda)}
\end{align*}
\begin{align*}
& |w|^2  \int_{\real^n\times \cdots\times \real^n} 
\prod^{n-3} |x_k|^{-(n-1)} \Big| w- \sum^{n-3} x_k\Big|^{-(n-1)} dx_1 \cdots dx_{n-3}\\
\noalign{\vskip6pt}
&\qquad 
= \pi^{[(n-1)^2 -3]/2} \left[ \Gamma \Big(\frac{n-1}2\Big)\right]^{-(n-2)} 
\left[ \Gamma \Big(\frac{n}2 -1\Big)\right]^{-1}
\end{align*}
Hence $\Lambda_n (w)$ is bounded for $n\ge 3$ if $\Delta_n(w)$ is bounded
for $n\ge 3$.
\renewcommand{\qed}{}
\end{proof}

\noindent
An intriguing feature of this argument is that $(n-1)$ is the unique uniform inverse 
power where one can preserve dilation invariance and obtain a reduction of this 
type that connects bounds for integrals of the form $\Lambda_n$, $\Delta_n$.
Perhaps this circumstance reflects a larger underlying symmetry in addition to 
the correspondence with the property that the convolution of $n$ functions in 
$L^{n/(n-1)} (\real^n)$ will be uniformly continuous. 
\medskip

\noindent {\sc Step 3:} consider $n=2$
\begin{align*}
\Delta_2 (w) 
& = |w| \int_{\real^2\times\real^2} 
\delta \Big[  1+ |w-x|^2 + |x-y|^2 - |y|^2\Big]
\Big[ |w-x|\, |x-y|\, |y|\Big]^{-1} \,dx\, dy\\
\noalign{\vskip6pt}
& = |w| \int_{\real^2} dx \int_{\real^2} dy\,
\delta \Big[ 1+ |w-x|^2 + |x|^2 - 2|x|\, |y|\, \cos \theta\Big]  
|w-x|^{-1} |y|^{-1} 
\Big[ |y|^2 -1-|w-x|^2\Big]^{-1/2}\\
\noalign{\vskip6pt}
& = 2|w| \int_{\real^2} dx \, |w-x|^{-1} \int_0^1 \frac1{\sqrt{1-u^2}}
\Big[ (1+|w-x|^2 + |x|^2)^2 - 4 |x|^2 u^2 (1+|w-x|^2)\Big]^{-1/2} du\\
\noalign{\vskip6pt}
& = |w| \int_{\real^2} |w-x|^{-1} 
\bigg[ \Big[ 1+|w-x|^2 + |x|^2\Big]^{-1} \int_0^1 
\frac1{\sqrt u}\ \frac1{\sqrt{1-u}}\ \frac1{\sqrt{1-\beta(x)u}} \,du\bigg] dx
\end{align*}
where 
$$\beta (x) = \frac{4|x|^2 (1+|w-x|^2)}{(1+|w-x|^2 + |x|^2)^2} \le 1\ .$$
Since for $0<u<1$, $\sqrt{1-u} \sqrt{1-\beta (x)} \le 1-u\beta (x)$
$$\Delta_2 (w) \le \biggl[ \int_0^1 u^{-1/2} (1-u)^{-3/4} du\bigg] |w| 
\int_{\real^2} |w-x|^{-1} \Big[ 1+|w-x|^2 + |x|^2\Big]^{-1/2} 
\Big| 1+|w-x|^2 - |x|^2\Big|^{-1/2}\, dx $$
Set $w = |w|\xi$, dilate by $|w|$ and choose $\xi$ as the $x_1$ direction
$$\Delta_2 (w) \le c \int_{\real^2} \left(  |x_1 -1|^2 + |x_2|^2\right)^{-1/2} 
\left[ 1+4 \Big[ (x_1 -1/2)^2 + |x_2|^2\Big]\right]^{-1/2} 
\Big|x_1 - \frac12 \Big( 1+\frac1{|w|^2}\Big)\Big|^{-1/2}\, 
dx_1 \,dx_2$$
Rearrange in the variable $x_1$ using 
$$\int_{\real^2} f(x,y) g(x,y) h(x,y)\, dx\,dy 
\le \int_{\real^2} f_\# (x,y) g_\# (x,y) h_\# (x,y)\,dx\,dy$$
where $f_\# (x,y)$ is the equimeasurable symmetric decreasing  rearrangement 
of $|f(x,y)|$ in the variable $x\in \real$. 
Then 
$$\Delta_2 (w) \le c\int_{\real^2} |x|^{-1} (1+4|x|^2)^{-1/2} |x_1|^{-1/2}\,dx$$
which is a convergent integral as one sees by using polar coordinates. 
Hence $\Delta_2 (w)$ is uniformly bounded.
The option to directly use rearrangement depends on the choice of the 
inverse power, e.g., the value $(n-1)$. 
\medskip

\noindent{\sc Step 4:} 
by using simple radial coordinate estimates, one can obtain for $n>2$ 
($c$ denotes a generic constant) 
$$\Delta_n(w) \le c\Delta_2 (\bar w)$$
where $\bar w \in \real^2$ with $|w| = |\bar w|$.

Observe that the integrands for both expressions treated here, $\Lambda_n(w)$ 
and $\Delta_n(w)$, are functions only of lengths and polar angles so that 
facilitates the simplicity of the argument.
\begin{align*}
\Delta_n (w) & = |w|^{n-1} \int_{\real^n\times\real^n} 
\delta \Big[ 1+ |w-x|^ 2+ |x|^2 - 2x\cdot y\Big] 
|w-x|^{-n-1} |y|^{-n-1}\ \times\\
\noalign{\vskip6pt}
&\hskip2in 
\Big[ |y|^2 -1 - |w-x|^2\Big]^{-(n-1)} \,dx\,dy\\
\noalign{\vskip6pt}
& = \frac{\sigma (S^{n-2)}}2 |w|^{n-1} \int_{\real^n} |w-x|^{-(n-1)} 
(2|x|)^{n-2} \left( 1+ |w-x|^2 + |x|^2\right)^{-(n-1)}\ \times \\
&\hskip2in
\int_0^1 u^{-1/2} (1-u)^{(n-3)/2} \left( 1-\beta (x)u\right)^{-(n-1)/2}\,du\,dx
\end{align*}
with $\beta (x)$ as before. 
Since $0\le\beta (x)\le 1$ and $0\le u\le 1$
$$(1-u)^{(n-3)/2} (1-\beta (x) u)^{-(n-1)/2} 
\le (1-u)^{-1/2} (1-\beta (x) u)^{-1/2}$$
In the integral over $\real^n$. first make the change of variables $z= w-x$, 
and then dilate $z$ by $|w|$ and integrate out the non-polar angle variables.
\begin{gather*}
\Delta_n (w)   \le 2^{n-4} \left[ \sigma (S^{n-2})\right]^2 |w|^2 
\int_0^\infty \int_0^\pi 
\left[ \frac{|w|^2 |z-\xi |\, |z|}{1+w[ |z-\xi |^2 + |z|^2]}\right]^{n-2}\ \times \\
\noalign{\vskip6pt}
\left[ 1+ |w|^2 \left( |z-\xi|^2 + |z|^2\right)\right]^{-1} 
d|z| (\sin \theta )^{n-2}\,d\theta 
\int_0^1 u^{-1/2} (1-u)^{-1/2} \left( 1-\beta (|w|(z-\xi))u\right)^{-1/2} du
\end{gather*}
Using 
$$\left[ \frac{|w|^2 |z-\xi |\, |z|}{1+ |w|^2 (|z-\xi |^2 + |z|^2)}\right] 
\le \frac12\ ,$$
\begin{align*}
\Delta_n (w) &\le \frac18 \left[ \sigma (S^{n-2})\right]^2 |w|^2 
\int_0^\infty \int_{-\pi}^\pi \frac1{|z|} 
\left[ 1+|w|^2 (|z-\xi|^2 + |z|^2)\right]^{-1} \\
\noalign{\vskip6pt}
&\qquad \int_0^1 u^{-1/2} (1-u)^{-1/2}
\left( 1-\beta (|w|(z-\xi))u\right)^{-1/2} \,du \, 
|z| d|z| d\theta
\end{align*}
Now since we can take $\xi$ as defining the polar angle for  the 
coordinate system, and $|z-\xi|$ only depends on this angle and the length
$|z|$, $z$ and $\xi$ can be repositioned as vectors in $\real^2$ so that 
\begin{align*}
\Delta_n(w) & \le \frac18 \left[ \sigma (S^{n-2})\right]^2 |w|^2 
\int_{\real^2} \frac1{|z|} \left[ 1 +|w|^2 \big( |z-\xi|^2 + |z|^2\big)\right]^{-1}\\
\noalign{\vskip6pt}
&\qquad \int_0^1 u^{-1/2} (1-u)^{-1/2} 
\left( 1-\beta \big( |w| (z-\xi)\big) u\right)^{-1/2} \,du\,dz
\end{align*}
Reversing the previous coordinate changes of dilation and translation
\begin{align*}
\Delta_n(w) & \le \frac18 \left[\sigma (S^{n-2})\right]^2 |w| 
\int_{\real^2} |w-x|^{-1} \left[ 1+ |x|^2 + |w-x|^2\right]^{-1/2}\\
\noalign{\vskip6pt}
&\qquad 
\int_0^1 u^{-1/2} (1-u)^{-1/2} \left( 1-\beta (x)u\right)^{-1/2} \,du\, dx
\end{align*}
which then gives the required control 
$$\sup_{w\in \real^n} \Delta_n(w) 
\le \frac18 \left[ \sigma (S^{n-2})\right]^2 
\sup_{w\in \real^2}  \Delta_2 (w)$$
and hence the uniform bound for $\Delta_2 (w)$ gives a uniform bound 
for $\Delta_n (w)$, $n>2$.
This completes the proof of Theorem~\ref{thm-Lambda}. 

As noted above, the inverse power $|x|^{-(n-1)}$ has a special role for the 
convolution estimates discussed here; still the two-dimensional result from 
the Lemma is suggestive that useful bounds might be obtained for 
inverse powers close to $\alpha = n-1$.
Consider for $\tau >0$ and $w\in \real^n$
\begin{equation}\label{eq-inversepower}
|w|^\rho \int_{\real^n\times\cdots\times\real^n}
\delta \left[ \tau + \mathop{{\sum}^\prime} 
|x_k|^2 - |x_n|^2\right]
\delta \left[ w-\sum x_k\right] \Pi |x_k|^{-\alpha} \,dx_1 \ldots dx_n
\end{equation}
For dilation invariance, $p = 2+n (\alpha -n+1)$ with the further requirement 
of positivity for the possibility of boundedness; that means 
$(n-1) \ge \alpha > (n-1) -2/n$ so that asymptotically $a\simeq n-1$ for 
large dimension. 
The upper bound is required by the nature of the proof for uniform boundedness.
As with Theorem~\ref{thm-Lambda}, the proof for uniform bounds will 
depend on a reduced integral form:
\begin{equation}\label{eq-reduced-integral}
|w|^\sigma \int_{\real^n\times\real^n} \delta \left[ \tau +|x|^2 - |y|^2\right] 
\delta (w-x-y) |x|^{-\alpha} |y|^{-\alpha} \,dx\,dy
\end{equation}
Here $\sigma = 2\alpha +2-n$ for dilation invariance.
For both forms, it suffices to show uniform bounds for $\tau=1$, and in two 
dimensions they are the same and already proved in the argument for the Lemma.

\begin{thm}\label{thm-uniform-bd}
For $n\ge 2$, $\sigma = 2\alpha + 2-n$ and $(n-1)/2 < \alpha < (n-1)$ 
\begin{equation*}
\Theta_{n,\alpha}  (w) 
= |w|^\sigma \int_{\real^n\times\real^n} 
\delta \left[ 1 + |x|^2 - |y|^2\right]
\delta (w-x-y) |x|^{-\alpha} |y|^{-\alpha} \,dx\,dy 
\end{equation*}
is uniformly bounded for $w\in \real^n$.
\end{thm}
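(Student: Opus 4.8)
The plan is to run the reduction in the spirit of Steps 3--4, but because the reduced form \eqref{eq-reduced-integral} carries only two Riesz kernels the constraint collapses almost completely and the whole matter becomes an explicit one-variable estimate. First I would use $\delta(w-x-y)$ to integrate out $y$, setting $y=w-x$, so that $\Theta_{n,\alpha}(w)=|w|^\sigma\int_{\real^n}\delta\big(1+|x|^2-|w-x|^2\big)|x|^{-\alpha}|w-x|^{-\alpha}\,dx$. The decisive observation is that the surviving delta is \emph{affine} in $x$: since $1+|x|^2-|w-x|^2=1-|w|^2+2\,w\cdot x$, it pins the component of $x$ along $w$ to the value $x_\parallel=(|w|^2-1)/(2|w|)$, with Jacobian $(2|w|)^{-1}$, while $x_\perp\in\real^{n-1}$ stays free. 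Writing $R=|w|$, $a=|x_\parallel|=|R^2-1|/(2R)$ and $b=(R^2+1)/(2R)$, one checks the identity $b^2=a^2+1$ (hence $b\ge 1$), and after polar coordinates in $x_\perp$,
$$\Theta_{n,\alpha}(w)=\tfrac12\,\sigma(S^{n-2})\,R^{\sigma-1}\,I(a),\qquad I(a)=\int_0^\infty (a^2+s^2)^{-\alpha/2}(b^2+s^2)^{-\alpha/2}\,s^{n-2}\,ds,$$
with $\sigma-1=2\alpha+1-n$; note $\sigma-1>0$ precisely because $\alpha>(n-1)/2$, and for $n=2$ this $I(a)$ is, up to constants, the integral already handled in the Lemma.

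Next I would establish two estimates for $I(a)$. The crude uniform one: from $(a^2+s^2)^{-\alpha/2}\le s^{-\alpha}$ and $(b^2+s^2)^{-\alpha/2}\le(1+s^2)^{-\alpha/2}$ (the latter using $b\ge1$) one gets $I(a)\le C_1:=\int_0^\infty s^{\,n-2-\alpha}(1+s^2)^{-\alpha/2}\,ds$, where convergence at $s=0$ uses $\alpha<n-1$ and convergence at $s=\infty$ uses $\alpha>(n-1)/2$ --- so both halves of the hypothesis enter here. The decay estimate: scaling $s=at$ and using $b^2+a^2t^2=a^2(1+t^2)+1\ge a^2(1+t^2)$ gives $I(a)\le C_2\,a^{\,n-1-2\alpha}=C_2\,a^{-(\sigma-1)}$ with $C_2=\int_0^\infty(1+t^2)^{-\alpha}t^{n-2}\,dt<\infty$. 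The exponent $n-1-2\alpha$ coinciding with $-(\sigma-1)$ is exactly the dilation balance of \eqref{eq-reduced-integral}, and this is what lets the decay of $I$ absorb the growth of the prefactor $R^{\sigma-1}$.

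Finally I would combine the two bounds by a case split on $R$. For $1/\sqrt2\le R\le\sqrt2$ the prefactor is harmless ($R^{\sigma-1}\le 2^{(\sigma-1)/2}$) and $I(a)\le C_1$ closes it. For $R\ge\sqrt2$ one has $R^2-1\ge R^2/2$, so $R/a=2R^2/(R^2-1)\le 4$, and the decay estimate gives $R^{\sigma-1}I(a)\le C_2\,(R/a)^{\sigma-1}\le C_2\,4^{\sigma-1}$; symmetrically, for $R\le1/\sqrt2$ one gets $R/a=2R^2/(1-R^2)\le2$ and the same conclusion --- here $\sigma-1>0$ is used to keep $(R/a)^{\sigma-1}$ bounded. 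Assembling the three ranges bounds $\Theta_{n,\alpha}(w)$ by a constant depending only on $n$ and $\alpha$, which is the assertion.

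I do not expect a genuine obstacle so much as one point that must be watched: tracking which of the inequalities $(n-1)/2<\alpha<n-1$ does what. The lower bound supplies both the convergence of $I(a)$ at infinity and the positivity $\sigma-1>0$ that makes the prefactor collapse; the upper bound is exactly what keeps $I(a)$ finite at $s=0$, i.e.\ on the sphere $|w|=1$, where $a=0$ and the two Riesz kernels meet the constraint hyperplane most singularly. Everything else is the same radial bookkeeping as in Step 4, made easier by the fact that the hyperbolic surface here intersects the momentum constraint in an affine subspace.
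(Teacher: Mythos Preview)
Your argument is correct. After eliminating $y=w-x$, you split $x$ into its component along $w$ and the orthogonal complement; the affine constraint pins $x_\parallel$, and you are left with the radial integral $I(a)$ in $s=|x_\perp|$, which you then control by two elementary bounds combined through a case split in $R=|w|$. Every step checks, and the hypotheses $(n-1)/2<\alpha<n-1$ enter exactly where you say.

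The paper takes a different coordinate reduction: it uses full polar coordinates $(r,u)=(|y|,\cos\theta)$ and integrates the delta in $r$, arriving at
\[
\Theta_{n,\alpha}(w)=c_n\Big[\tfrac{|w|^2}{1+|w|^2}\Big]^{2\alpha-n+1}\!\int_0^1(1-u)^{(n-3)/2}u^{\alpha-(n-1)/2-1}\bigl(1-\beta(w)u\bigr)^{-\alpha/2}\,du,
\]
with $\beta(w)=4|w|^2/(1+|w|^2)^2\le 1$; then the single inequality $(1-\beta u)^{-\alpha/2}\le(1-u)^{-\alpha/2}$ collapses the integral to a Beta function, giving one explicit bound with the $|w|$-dependence visible. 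Your route avoids the hypergeometric form entirely and is more elementary, at the cost of a three-range split rather than a single closed expression; it also makes the geometric picture (affine hyperplane meeting the two Riesz singularities) more transparent. The paper's route connects directly to the hypergeometric calculation in Step~1 and yields the sharper statement that $\Theta_{n,\alpha}(w)\to 0$ as $|w|\to 0$. Either argument proves the theorem as stated. One small remark: your reference to ``Steps~3--4'' is a slight misdirection---the paper's own proof of this theorem is the analogue of Step~1 and the Lemma, not of the $\Delta_n$ analysis.
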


\begin{proof}
Let $n\ge 2$: 
\begin{align*}
\Theta_{n,\alpha} (w) 
& = |w|^\sigma \int_{\real^n} \delta \left( 1+ |w-y|^2 - |y|^2\right) |w-y|^{-\alpha} 
|y|^{-\alpha} \,dy \\
\noalign{\vskip6pt}
& = \frac{2\pi^{(n-1)/2}}{\Gamma ((n-1)/2)} |w|^\sigma 
\int_1^\infty\!\! \int_0^1 \delta \left( 1+|w|^2 - 2|w| ru\right) (r^2-1)^{-\alpha/2} 
r^{n-\alpha-1} (1-u^2)^{(n-3)/2} \,dr\,du\\
\noalign{\vskip6pt}
&= \frac{\pi^{(n-1)/2} 2^{2\alpha-n}}{\Gamma ((n-1)/2)}
\left[ \frac{|w|^2}{1+|w|^2}\right]^{2\alpha -n+1} \int_0^1 (1-u)^{(n-3)/2}
u^{\alpha -(n-1)/2\ -1} \big( 1-\beta (2)u\big)^{-\alpha/w} \,du
\end{align*}
where $\beta (w) = \frac{4|w|^2}{(1+|w|^2)^2} \le 1$ 
with $(1-\beta (w)u)^{-\alpha/2} \le (1-u)^{-\alpha/2}$.
Then 
\begin{align*}
\Theta_{n,\alpha} (w) 
& \le \frac{\pi^{(n-1)/2} 2^{2\alpha-n}}{\Gamma ((n-1)/2)} 
\left[ \frac{|w|^2}{1+|w|^2}\right]^{2\alpha-n+1} 
\int_0^1 (1-u)^{(n-1-\alpha)/2\ -1} u^{\alpha-(n-1)/2\ -1} \,du\\
\noalign{\vskip6pt}
& = \pi^{(n-1)/2} 2^{2\alpha-n}      
\ \frac{\Gamma ((n-1-\alpha)/2)\ \Gamma ((2\alpha -n+1)/2)}
{\Gamma ((n-1)/2)\ \Gamma (\alpha/2)}
\ \left[ \frac{|w|^2}{1+|w|^2}\right]^{2\alpha -n+1}
\end{align*}
Hence $\Theta_{n,\alpha} (w)$ is bounded for $(n-1)/2 <\alpha <n-1$.
Note that $\Lambda_{2,\alpha} = \Theta_{2,\alpha}$.
\renewcommand{\qed}{}
\end{proof}

\begin{thm}\label{thm3}
For $n\ge 3$, $\rho = 2+n(\alpha-n+1)$ and $n-1 >\alpha > n-1-2/n$ 
\begin{equation*}
\Lambda_{n,\alpha} (w) 
= |w|^\rho \int_{\real^n\times\cdots \times\real^n} 
\delta \Big[ 1+\sum\nolimits^\prime |x_k|^2 - |x_n|^2\Big] 
\delta \Big( w-\sum x_k\Big) \Pi |x_k|^{-\alpha} \,dx_1 \ldots dx_n
\end{equation*}
is uniformly bounded for $w\in \real^n$.
\end{thm}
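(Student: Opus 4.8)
The plan is to reuse the reduction performed in \textsc{Step 2} of the proof of Theorem~\ref{thm-Lambda}, but carried one layer further: rather than stopping at a three-variable core, I would peel $\Lambda_{n,\alpha}$ all the way down to the two-variable reduced integral~\eqref{eq-reduced-integral}. Fix $x_1,\dots,x_{n-2}\in\real^n$, set $x=x_{n-1}$, $y=x_n$, and put $v=w-\sum_{k=1}^{n-2}x_k$, $\tau=1+\sum_{k=1}^{n-2}|x_k|^2$. Then the linear delta factors as $\delta\big(w-\sum_{k=1}^n x_k\big)=\delta(v-x-y)$ and the quadratic delta is $\delta(\tau+|x|^2-|y|^2)$, so
$$\Lambda_{n,\alpha}(w)=|w|^\rho\int_{(\real^n)^{n-2}}\Big(\prod_{k=1}^{n-2}|x_k|^{-\alpha}\Big)\,J(\tau,v)\,dx_1\cdots dx_{n-2},$$
where $J(\tau,v)=\int_{\real^n\times\real^n}\delta(\tau+|x|^2-|y|^2)\,\delta(v-x-y)\,|x|^{-\alpha}|y|^{-\alpha}\,dx\,dy$ is exactly~\eqref{eq-reduced-integral}.

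Next I would invoke Theorem~\ref{thm-uniform-bd}. A single dilation $x\mapsto\sqrt\tau\,x$, $y\mapsto\sqrt\tau\,y$, together with the defining relation $\sigma=2\alpha+2-n$, makes the power of $\tau$ cancel and gives $J(\tau,v)=|v|^{-\sigma}\,\Theta_{n,\alpha}(v/\sqrt\tau)$, hence $J(\tau,v)\le C_{n,\alpha}|v|^{-\sigma}$ with $C_{n,\alpha}=\sup_w\Theta_{n,\alpha}(w)<\infty$, \emph{uniformly} in $\tau>0$ and $v$. Since $\tau\ge1$ and all the integrands are nonnegative, Tonelli justifies the interchange, and we are reduced to bounding
$$|w|^\rho\int_{(\real^n)^{n-2}}\Big(\prod_{k=1}^{n-2}|x_k|^{-\alpha}\Big)\Big|w-\sum_{k=1}^{n-2}x_k\Big|^{-\sigma}dx_1\cdots dx_{n-2},$$
which is the $(n-1)$-fold convolution of Riesz potentials — $n-2$ factors $|\cdot|^{-\alpha}$ and one factor $|\cdot|^{-\sigma}$ — evaluated at $w$.

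This convolution I would compute by iterating the composition rule $|\cdot|^{-\lambda_1}*|\cdot|^{-\lambda_2}=c_{n,\lambda_1,\lambda_2}\,|\cdot|^{-(\lambda_1+\lambda_2-n)}$, equivalently by multiplying out the Fourier-transform formula for $\F[|x|^{-\lambda}]$ recorded above. After convolving $j$ of the factors $|\cdot|^{-\alpha}$ one has $|\cdot|^{-\mu_j}$ with $\mu_j=j\alpha-(j-1)n$; folding in the last factor $|\cdot|^{-\sigma}$ produces $|\cdot|^{-\mu}$ with $\mu=(n-2)\alpha-(n-2)n+\sigma$, and the elementary identity $\mu=2+n(\alpha-n+1)=\rho$ (immediate from $\sigma=2\alpha+2-n$) shows the displayed expression equals $|w|^\rho\cdot\mathrm{const}\cdot|w|^{-\rho}$, a constant independent of $w$. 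This gives the uniform bound for $\Lambda_{n,\alpha}$.

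The only genuinely delicate point — and the one I expect to be the ``obstacle,'' though it is really bookkeeping — is checking that the convergence hypotheses for every Riesz composition hold precisely on the stated $\alpha$-range. A step $|\cdot|^{-\mu_j}*|\cdot|^{-\alpha}$ converges iff $0<\mu_j,\alpha<n$ and $\mu_j+\alpha>n$; the last such step, producing $\mu_{n-2}$, requires $\alpha>n-n/(n-2)=n(n-3)/(n-2)$, which is comfortably implied by $\alpha>n-1-2/n$. The closing step $|\cdot|^{-\mu_{n-2}}*|\cdot|^{-\sigma}$ needs $0<\sigma<n$, i.e.\ $(n-2)/2<\alpha<n-1$ — this is where the upper endpoint is used, and it is worth noting that $\sigma=n$ exactly when $\alpha=n-1$, which is why Theorem~\ref{thm-Lambda} had to route through the three-variable $\Delta_n$ — together with $\mu_{n-2}+\sigma>n$, which simplifies to exactly $\alpha>n-1-2/n$. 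Thus the lower endpoint in the hypothesis is dictated by, and is precisely adequate for, the final Riesz convolution. For $n=3$ there are no intermediate potentials at all (one convolves a single $|\cdot|^{-\alpha}$ against $|\cdot|^{-\sigma}$) and the argument degenerates gracefully. Beyond this, I anticipate no difficulty, since Theorem~\ref{thm-uniform-bd} and the classical Riesz-potential calculus supply everything that is needed.
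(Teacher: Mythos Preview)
Your proposal is correct and follows essentially the same route as the paper's own proof: peel off the inner two variables, bound the resulting core by $\sup_w\Theta_{n,\alpha}(w)$ via Theorem~\ref{thm-uniform-bd}, and observe that the remaining $(n-1)$-fold Riesz convolution is exactly $c\,|w|^{-\rho}$. Your verification of the convergence windows for each Riesz composition (in particular that the final step $|\cdot|^{-\mu_{n-2}}*|\cdot|^{-\sigma}$ forces precisely $\alpha>n-1-2/n$, and that $\sigma<n$ forces $\alpha<n-1$) is more explicit than the paper, which simply asserts the last integral is ``constant in $w$''; otherwise the arguments coincide.
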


\begin{proof}
The argument here rests on the boundedness of $\Theta_{n,\alpha}$ following the 
method of Step~2 in the proof of Theorem~\ref{thm-Lambda}.
For $n\ge 3$ use the second delta function for the variable $x_{n-1}$ and write 
\begin{align*}
\Lambda_{n,\alpha} (w) & = |w|^\rho 
\int_{\underbrace{\scriptstyle \real^n\times\cdots\times\real^n}_{(n-2)\text{ copies}}}
\mathop{\Pi}\limits^{n-2}  |x_k|^{-\alpha} \big| w-\sum^{n-2} x_k\big|^{-\sigma}\ \times\\
\noalign{\vskip6pt}
& \bigg[ \big| w-\sum^{n-2} x_k\big|^\sigma 
\int_{\real^n} \delta \Big[ 1 + \sum |x_k|^2 + \big| w-\sum x_k-y\big|^2 - |y|^2\Big] \ \times\\
\noalign{\vskip6pt}
&\hskip.5in
 \Big[ \big| w-\sum x_k -y\big|\ |y|\Big]^{-\alpha}\,dy \bigg] \, dx_1 \ldots dx_{n-2}\\
\noalign{\vskip6pt}
&\le c_n |w|^\rho \int_{\real^n\times\cdots\times\real^n} 
\mathop{\Pi}\limits^{n-2} |x_k|^{-\alpha} \big| w-\sum^{n-2} x_k\big|^{-\sigma} 
dx_1 \ldots dx_{n-2}
\end{align*}
where 
\begin{equation*}
c_n = \sup_{\tau,v} |v|^\sigma \int_{\real^n} 
\delta \left[ \tau + |v-y|^2 - |y|^2\right] \, \big[ |v-g|\, |y|\big]^{-\alpha} \, dy
= \sup_w \Theta_{n,\alpha} (w)
\end{equation*}
and in the earlier expression, 
$\tau = 1 +\sum^{n-2} |x_k|^2$ and $v= w- \sum^{n-2} x_k$.
Then 
\begin{equation*}  
\Lambda_{n,\alpha} (w) \le c_n |w|^\rho \int_{\real^n\times\cdots\times\real^n} 
\Pi |x_k|^{-\alpha} \big| w-\sum x_k\big|^{-\sigma} dx_1 \ldots dx_{n-2}
\end{equation*}
with the integral 
\begin{equation*}
|w|^\rho \int_{\real^n\times \cdots\times \real^n} \Pi |x_k|^{-\alpha} 
\big|  w-\sum x_k\big|^{-\sigma} \, dx_1 \ldots dx_{n-2}
\end{equation*}
being constant in $w$ so that $\Lambda_{n,\alpha} (w)$ is bounded for $n\ge 3$
if $\Theta_{n,\alpha} (w)$ is bounded for $n\ge 3$.
\renewcommand{\qed}{}
\end{proof}

In surveying the estimates outlined above, the critical computation would seem 
to be the surface integral 
\begin{equation*}
\int_S \frac1{|w-y|^\lambda} \ \frac1{|y|^\mu}\, dv
\end{equation*}
which then the convolution algebra for Riesz potentials allows an extended 
multilinear result. 
For completeness, an outline is given for non-uniform Riesz potentials.

\begin{thm}\label{thm4}
For $n\ge 2$, $\sigma = \alpha +\lambda + 2-n$, $\alpha +\lambda > n-1$ 
and $0<\alpha < n-1$
\begin{equation}
\Theta_{n,\alpha,\lambda} (w) 
= |w|^\sigma \int_{\real^n\times\real^n} 
\delta \left[ 1+ |x|^2 - |y|^2\right] \delta (w-x-y) |x|^{-\alpha} |y|^{-\lambda} \,dx\,dy
\end{equation}
is uniformly bounded for $w\in \real^n$.
\end{thm}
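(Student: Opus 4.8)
The plan is to run the proof of Theorem~\ref{thm-uniform-bd} essentially verbatim, carrying the two exponents $\alpha$ and $\lambda$ separately instead of setting $\lambda=\alpha$. First I would use the delta $\delta(w-x-y)$ to put $x = w-y$, reducing to
$$\Theta_{n,\alpha,\lambda}(w) = |w|^\sigma \int_{\real^n} \delta\big(1+|w-y|^2-|y|^2\big)\,|w-y|^{-\alpha}\,|y|^{-\lambda}\,dy ,$$
then pass to polar coordinates $y = r\omega$ and let $u = \cos\theta$, $\theta$ being the angle between $y$ and $w$; integrating out the sphere transverse to $w$ produces the weight $(1-u^2)^{(n-3)/2}$ and the factor $2\pi^{(n-1)/2}/\Gamma((n-1)/2)$. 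On the support of the delta the $r^2$ terms cancel, leaving $1+|w|^2-2|w|ru = 0$, and there $|w-y|^2 = |y|^2-1 = r^2-1$; note that $|y| = r \ge 1$ — and in fact $\lambda > n-1-\alpha > 0$ under the hypotheses — so $|y|^{-\lambda} = r^{-\lambda}$ is a harmless factor while the only singularity is $(r^2-1)^{-\alpha/2}$ near $r = 1$. This is the step at which the problem stops being symmetric in the two potentials, and the one thing to watch is that it is $|w-y|$, not $|y|$, which must carry the exponent subject to a nontrivial integrability constraint.

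Next I would resolve the delta in $r$, localizing at $r_\ast = (1+|w|^2)/(2|w|u)$ with Jacobian $(2|w|u)^{-1}$ (noting that $u\le 1$ forces $r_\ast\ge 1$ automatically, so no extra constraint intrudes), collect the resulting powers of $2|w|u$ and of $1+|w|^2$, and substitute $v = u^2$. Using $\sigma = \alpha+\lambda+2-n$, this should bring $\Theta_{n,\alpha,\lambda}(w)$ to precisely the shape found for $\Theta_{n,\alpha}$:
$$\Theta_{n,\alpha,\lambda}(w) = c_{n}\left[\frac{|w|^2}{1+|w|^2}\right]^{\sigma-1}\int_0^1 v^{(\alpha+\lambda-n-1)/2}\,(1-v)^{(n-3)/2}\,\big(1-\beta(w)v\big)^{-\alpha/2}\,dv ,\qquad \beta(w)=\frac{4|w|^2}{(1+|w|^2)^2}\le 1 .$$
Since $\sigma-1 = \alpha+\lambda+1-n > 0$ by the hypothesis $\alpha+\lambda > n-1$, and $0\le |w|^2/(1+|w|^2) < 1$, the bracketed prefactor is bounded uniformly in $w$, so the task reduces to bounding the $v$-integral independently of $w$.

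To do that I would use $0\le\beta(w)\le 1$ and $\alpha > 0$ to majorize $(1-\beta(w)v)^{-\alpha/2}\le (1-v)^{-\alpha/2}$ — a bound tuned to the worst case $\beta(w) = 1$, i.e.\ $|w| = 1$ — reducing the integral to a Beta function,
$$\int_0^1 v^{(\alpha+\lambda-n-1)/2}\,(1-v)^{(n-1-\alpha)/2\,-1}\,dv ,$$
which is finite exactly when $(\alpha+\lambda-n+1)/2 > 0$ and $(n-1-\alpha)/2 > 0$, that is, exactly when $\alpha+\lambda > n-1$ and $\alpha < n-1$. Thus the three hypotheses each enter once: $\alpha+\lambda > n-1$ simultaneously supplies convergence at $v=0$, positivity of $\sigma-1$ (hence boundedness of the $w$-prefactor), and positivity of the first Beta argument; $\alpha < n-1$ supplies convergence at $v=1$ after the majorization; and $0 < \alpha$ licenses that majorization. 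I do not expect a genuine obstacle in this argument — the only delicate part is the bookkeeping of exponents through the change of variables and the check that these three integrability thresholds coincide with the three stated hypotheses. Once Theorem~\ref{thm4} is in hand, the Riesz-potential convolution identity $\F\big[|x|^{-\lambda}\big](\xi) = \pi^{-n/2+\lambda}\,\Gamma((n-\lambda)/2)\,\Gamma(\lambda/2)^{-1}\,|\xi|^{-(n-\lambda)}$ propagates it, exactly as in Step~2, to the associated multilinear integral carrying one ``large'' potential of exponent $\lambda$ together with several of exponent $\alpha$.
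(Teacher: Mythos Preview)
Your proposal is correct and follows the paper's own proof essentially line for line: the same reduction to a single $y$-integral, the same passage to polar coordinates and resolution of the delta at $r_\ast=(1+|w|^2)/(2|w|u)$, the same hypergeometric-type integral with parameter $\beta(w)=4|w|^2/(1+|w|^2)^2$, and the same majorization $(1-\beta(w)v)^{-\alpha/2}\le(1-v)^{-\alpha/2}$ leading to a Beta integral whose convergence is governed exactly by the three hypotheses. Your closing remark about propagating the bound to the multilinear form via the Riesz-potential convolution is also the content of the paper's Theorem~\ref{thm5}.
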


\begin{proof}
Let $n\ge 2$; observe that since $|y| \ge1$, there is no upper bound for $\lambda$ 
in this computation.
\begin{align*}
\Theta_{n,\alpha,\lambda} (w) 
& = |w|^\sigma \int_\real^n
\delta \left[ 1+  |w-y|^ 2- |y|^2\right] |w-y|^{-\alpha} |y|^{-\lambda}\,dy\\
\noalign{\vskip6pt}
& = \frac{2\pi^{(n-1)/2}} {\Gamma ((n-1)/2)} |w|^\sigma 
\int_1^\infty \int_0^1 \delta \left( 1+ |w|^2 -2|w| ru\right) (r^2 -1)^{-\alpha/2} 
r^{n-\lambda-1}  (1+u^2)^{(n-3)/2} \,dr\,du\\
\noalign{\vskip6pt}
& = \frac{2^{\alpha+\lambda-n} \pi^{(n-1)/2}} {\Gamma ((n-1)/2)} 
\left[ \frac{|w|^2} {1+|w|^2} \right]^{\alpha + \lambda -n+1} 
\int_0^1 (1-u)^{(n-3)/2} u^{(\alpha +\lambda -n+1)/2\ -1} 
(1-\beta (w)u)^{-\alpha/2}\,du 
\end{align*}
where $\beta (w) = \frac{4|w|^2}{(1+|w|^2)^2} \le 1$ with 
$(1-\beta (w) u)^{-\alpha/2} \le (1-u)^{-\alpha/2}$.
Then 
\begin{align*}
\Theta_{n,\alpha,\lambda} (w)
& = \frac{2^{\alpha+\lambda-n} \pi^{(n-1)/2}} {\Gamma ((n-1)/2)} 
\left[ \frac{|w|^2}{1+ |w|^2}\right]^{\alpha +\lambda -n+1} 
\int_0^1 u^{(\alpha +\lambda -n+1)/2\ -1} 
(1-u)^{(n-1-\alpha)/2\ -1} \,du\\
\noalign{\vskip6pt}
& = 2^{\alpha +\lambda -n}  \pi^{(n-1)/2} 
\frac{\Gamma ((\alpha +\lambda -n+1)/2)\ \Gamma ((n-1-\alpha)/2)}
{\Gamma ((n-1)/2)\ \Gamma (\lambda/2)} 
\left[ \frac{|w|^2}{1+|w|^2} \right]^{\alpha +\lambda -n+1}
\end{align*}
Hence $\Theta_{n,\alpha,\lambda} (w)$ is bounded for $\lambda >0$, 
$0<\alpha <n-1$ and $\alpha +\lambda > n-1$.
\end{proof}

\begin{thm}\label{thm5}
For $n\ge 3$, consider real-valued exponents $0<\alpha_k < n$, $k= 1,\ldots,n-1$
and $\lambda >0$ with $\alpha = \sum \alpha_k$ and $\rho = 2+\alpha + \lambda
-n (n-1)$ so that $0<\rho <n$.
Further assume one exponent $\alpha_i$ together with $\lambda$ satisfies:
$0<\alpha_i < n-1$ and $n-1 < \alpha_i + \lambda < 2(n-1)$; relabel this $a_i$ as
$\alpha_{n-1}$.
Then 
$$\Lambda_{n,\alpha,\lambda} 
= |w|^\rho \int_{\real^n\times\real^n} 
\delta \bigg[ 1 + \mathop{{\sum}'} |x_k|^2 - |x_n|^2\bigg] 
\delta \Big(w-\sum x_k\Big)   \Pi |x_k|^{-\alpha_k}  |x_n|^{-\lambda} \,
dx_1 \ldots dx_n$$
is uniformly bounded for $w\in \real^n$.
\end{thm}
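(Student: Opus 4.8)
The plan is to run the mechanism of Step~2 in the proof of Theorem~\ref{thm-Lambda} (equivalently the proof of Theorem~\ref{thm3}): use the linear delta function to eliminate one variable, isolate the two-variable integral that Theorem~\ref{thm4} controls, and recognize the rest as a convolution of Riesz potentials whose value is a constant multiple of $|w|^{-\rho}$. The relabelling of $\alpha_i$ as $\alpha_{n-1}$ is precisely so that the variable carrying this exponent, after elimination, becomes the ``$|v-y|$'' factor in that two-variable integral, matched with the $|y|^{-\lambda}$ factor.

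First I would use $\delta(w-\sum x_k)$ to solve $x_{n-1}=w-\sum^{n-2}x_k-x_n$ and write, with $v=w-\sum^{n-2}x_k$, $\tau=1+\sum^{n-2}|x_k|^2$, and $\sigma_{n-1}=\alpha_{n-1}+\lambda+2-n$,
$$\Lambda_{n,\alpha,\lambda}(w)=|w|^\rho\int_{(\real^n)^{n-2}}\Big(\prod^{n-2}|x_k|^{-\alpha_k}\Big)\,|v|^{-\sigma_{n-1}}\,\Psi(x_1,\dots,x_{n-2})\,dx_1\cdots dx_{n-2},$$
$$\Psi=|v|^{\sigma_{n-1}}\int_{\real^n}\delta\big[\tau+|v-y|^2-|y|^2\big]\,|v-y|^{-\alpha_{n-1}}\,|y|^{-\lambda}\,dy .$$
The choice of $\sigma_{n-1}$ makes $\Psi$ dilation invariant, so $\Psi=\Theta_{n,\alpha_{n-1},\lambda}(v/\sqrt{\tau})$; the hypotheses $0<\alpha_{n-1}<n-1$ and $\alpha_{n-1}+\lambda>n-1$ are exactly the range of Theorem~\ref{thm4}, whence $\Psi\le c_n:=\sup_{w}\Theta_{n,\alpha_{n-1},\lambda}(w)<\infty$; moreover $n-1<\alpha_{n-1}+\lambda<2(n-1)$ forces $1<\sigma_{n-1}<n$, which will be needed below.

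Pulling $c_n$ out, it remains to see that
$$|w|^\rho\int_{(\real^n)^{n-2}}\Big(\prod^{n-2}|x_k|^{-\alpha_k}\Big)\Big|w-\sum^{n-2}x_k\Big|^{-\sigma_{n-1}}dx_1\cdots dx_{n-2}$$
is independent of $w$: it is the value at $w$ of $|\cdot|^{-\alpha_1}*\cdots*|\cdot|^{-\alpha_{n-2}}*|\cdot|^{-\sigma_{n-1}}$, and iterating the Riesz composition formula (equivalently, multiplying the Fourier transforms of Riesz potentials recorded in Step~2) collapses this to a constant times $|w|^{-\mu}$ with $\mu=\sum^{n-2}\alpha_k+\sigma_{n-1}-(n-2)n$. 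Since $\alpha=\sum^{n-1}\alpha_k$, a one-line computation gives $\mu=2+\alpha+\lambda-n(n-1)=\rho$, so $|w|^\rho$ times the integral is a finite constant and $\Lambda_{n,\alpha,\lambda}(w)\le c_n\cdot(\text{const})$ uniformly in $w$.

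The step I expect to need the most care is the legitimacy of the iterated composition: $|\cdot|^{-a}*|\cdot|^{-b}=c\,|\cdot|^{-(a+b-n)}$ holds only for $0<a,b<n$ and $a+b>n$, so the $n-1$ factors must be ordered so that every partial exponent stays in $(0,n)$ and each consecutive pair sums above $n$. Taking the exponents (the $\alpha_k$, all in $(0,n)$, together with $\sigma_{n-1}\in(1,n)$) in decreasing order does the job: their total is $\rho+(n-2)n$ with $\rho>0$, so the sum of the $j$ largest exceeds $(j-1)n$ for every $j\le n-1$, which is exactly what each stage of the composition formula requires. This is where the constraints $0<\alpha_k<n$ and $0<\rho<n$ enter, and it completes the argument.
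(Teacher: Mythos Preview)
Your proposal is correct and follows essentially the same route as the paper: eliminate $x_{n-1}$ via the linear $\delta$, bound the inner two-variable integral by $\sup_w \Theta_{n,\alpha_{n-1},\lambda}(w)$ from Theorem~\ref{thm4}, and recognize the remaining $(n-2)$-fold integral as a Riesz convolution that is constant times $|w|^{-\rho}$. Your final paragraph verifying that the iterated Riesz composition is legitimate (using that the $n-1$ exponents lie in $(0,n)$ with total $\rho+(n-2)n>(n-2)n$, so any $j$ of them sum to more than $(j-1)n$) is a point the paper leaves implicit, and it is precisely where the hypotheses $0<\alpha_k<n$, $0<\rho<n$, and $\alpha_{n-1}+\lambda<2(n-1)$ are consumed.
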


\begin{proof} 
As in Theorem~\ref{thm3}, the argument here rests on the uniform boundedness of 
$$|w|^\sigma \int_{\real^n\times\real^n} 
\delta \left[ \tau + |x|^2 - |y|^2\right] 
\delta (w-x-y) |x|^{-\alpha_{n-1}} |y|^{-\lambda} \,dx\,dy$$
for $\tau>0$ and $w\in \real^n$ which is determined by Theorem~\ref{thm4}. 
For $n\ge 3$ use the second delta function for the variable $x_{n-1}$ and 
write with $\sigma = \alpha_{n-1} + \lambda +2 -n$
\begin{align*}
\Lambda_{n,\alpha,\lambda} 
& = |w|^\rho 
\int_{\underbrace{\scriptstyle \real^n\times\cdots\times\real^n}_{(n-2) \text{ copies}} }
 \mathop{\Pi}\limits^{n-2}
 |x_k|^{-\alpha_k} \Big| w-\sum^{n-2} x_k\Big|^{-\sigma}\ \times \\
 \noalign{\vskip6pt}
&\qquad \bigg[ \Big| w-\sum^{n-2} x_k\Big|^\sigma \int_{\real^n} 
\delta\bigg[ 1+\sum |x_k|^2 + \Big| w- \sum x_k -y\Big|^2 - |y|^2 \bigg] \ \times\\
\noalign{\vskip6pt}
&\qquad \qquad 
\Big| w-\sum x_k -y\Big|^{-\alpha_{n-1}} |y|^{-\lambda}\bigg] \,dx_1\ldots dx_{n-2} \\
\noalign{\vskip6pt}
& \le c_{n,\alpha,\lambda} |w|^\rho \int_{\real^n\times\cdots \times\real^n} 
 \mathop{\Pi}\limits^{n-2}  
 |x_k|^{-\alpha_k} \Big| w-\sum^{n-2} x_k\Big|^{-\sigma} 
 dx_1 \ldots dx_{n-2}
\end{align*}
where 
$$c_{n,\alpha,\lambda} 
= \sup_{\tau,v} |v|^\sigma \int_{\real^n} 
\delta \left[ \tau + |v-y|^2 - |y|^2\right] |v-y|^{-\alpha_{n-1}} |y|^{-\lambda}$$
and in the earlier expression $\tau = 1+ \sum^{n-2} |x_k|^2$ and 
$v= w- \sum^{n-2} x_k$.
Then 
$$\Lambda_{n,\alpha,\lambda} (w) 
\le c_{n,\alpha,\lambda} |w|^\rho 
\int_{\real^n\times\cdots\times\real^n} \Pi |x_k|^{-\alpha_k} 
\Big| w-\sum x_k\Big|^{-\sigma} dx_1\ldots dx_{n-2}$$
with the integral 
$$|w|^\rho \int_{\real^n\times\cdots\times\real^n} \Pi |x_k|^{-\alpha_k} 
\Big| w-\sum x_k\Big|^{-\sigma} dx_1 \ldots dx_{n-2}$$
being constant in $w$ so that $\Lambda_{n,\alpha,\lambda} (w)$ is bounded 
in $w$ for $n\ge 3$ if $\Theta_{n,\alpha,\lambda} (w)$ is bounded for $n\ge 3$
subject to the conditions on $\alpha_{n-1}$ and $\lambda$.
\end{proof}

Implicit in the formulation of the problems treated here is the continuing 
development of new forms that characterize control by smoothness for size.
As an example and a consequence of the principal estimate obtained here, 
bounds for   new Stein-Weiss integrals with a kernel determined by restriction 
to a smooth submanifold can be shown.

\begin{thm}\label{thm6}
Define
\begin{align*}
K(w,v) &= \int_{\real^n\times\cdots\times\real^n} 
\Pi |x_k|^{-(n-1)}  \left[ \big| w - \sum x_k\big|\, \big| v-\sum x_k\big|\right]^{-(n-1)}\ \times\\
\noalign{\vskip6pt}
&\hskip1in \delta \left[ 1+ \mathop{{\sum}'} |x_k|^2 - |x_n|^2\right] 
\,dx_1 \ldots dx_n\ ,\quad n\ge 3\ ;\\
\noalign{\vskip6pt}
K_{n,\alpha} (w,v) & = \int_{\real^n\times\cdots\times \real^n}
\Pi |x_k|^{-\alpha} \left[ \big|w-\sum x_k\big|\, \big|v-\sum x_k\big|\right]^{-\lambda}\ \times\\
\noalign{\vskip6pt}
&\hskip1in \delta \left[ 1+ \mathop{{\sum}'} |x_k|^2 - |x_n|^2\right] \,dx_1 \ldots dx_n\ ,
\quad n\ge 3\\
\noalign{\vskip6pt}
&\lambda = n(n-\alpha +1)/2\ -\ 1\ ,\quad n-1 > \alpha > n-2\ -\ 2/n\\
\noalign{\vskip6pt}
H_{n,\alpha} (w,v) & = \int_{\real^n\times\real^n}  
|x|^{-\alpha} |y|^{-\alpha} 
\big[ |w-x-y|\,|v-x-y|\big]^{-(3n/2\ -\, 1-\alpha)} \ \times\\
\noalign{\vskip6pt}
&\hskip1in \delta \left( 1+ |x|^2 - |y|^2\right)\,dx\,dy\ ,\quad n\ge 2\ ,\ 
(n-1)/2 < \alpha < n-1\ ;\\
\noalign{\vskip6pt}
J(w,v) & = \int_{\real^n\times\real^n\times\real^n} 
\big[ |x|\, |y|\, |z|\big]^{-(n-1)} 
\bigl |w-x-y-z|\, |v-x-y-z|\big]^{-(n+1)/2}\ \times \\
\noalign{\vskip6pt}
&\hskip1in 
\delta \left[ 1+|x|^2 + |z|^2 - |y|^2\right]\,dx\,dy\,dz\ ,\quad n\ge 2\ ;
\end{align*}
then for non-negative $f\in L^2 (\real^n)$  and $T$ representing the above kernels 
\begin{equation}\label{eq5}
\int_{\real^n\times\real^n} f(w) T(w,v) f(v)\,dw\,dv 
\le c\int_{\real^n} |f|^2\,dx
\end{equation}
\end{thm}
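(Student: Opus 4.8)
The plan is to recognize each kernel $T(w,v)$ as essentially the product of two copies of a single-variable convolution integral of the type already estimated --- $\Lambda_n$, $\Lambda_{n,\alpha}$, $\Delta_n$, or their non-uniform analogues --- glued along a common hyperbolic surface. Concretely, fix the point $u=\sum x_k$ (or $u = x+y+z$ in the case of $J$) on the constraint hypersurface $\{1 + \sum' |x_k|^2 = |x_n|^2\}$, and observe that the kernel factors as $|w-u|^{-\gamma}|v-u|^{-\gamma}$ times a density $d\nu(u)$ obtained by integrating the remaining variables against the Riesz weights and the delta function. The key structural point is that the total weight has been arranged so that each factor, after supplying $|w|^\rho$ or $|v|^\rho$, is precisely one of the bounded quantities from Theorems~\ref{thm-Lambda}, \ref{thm3}, \ref{thm5}; the exponents $\lambda = n(n-\alpha+1)/2 - 1$, $3n/2 - 1 - \alpha$, $(n+1)/2$ are exactly the dilation-balanced values that make this factorization dimensionally consistent.

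First I would reduce \eqref{eq5} to an $L^2\to L^2$ bound on the integral operator with kernel $T$, i.e. show $T$ maps $L^2(\real^n)$ to itself with bounded norm, which by symmetry and positivity of the kernel is equivalent to \eqref{eq5}. Next I would write $T(w,v) = \int \Phi(w,u)\Phi(v,u)\, d\mu(u)$ for an appropriate kernel $\Phi$ and measure $\mu$ on $\real^n$; for the kernel $K$, for instance, $\Phi(w,u) = |w-u|^{-(n-1)}$ and $d\mu(u)$ carries the $\prod|x_k|^{-(n-1)}$ weight and the surface delta, pushed forward under $u = \sum x_k$. Then $\int f(w)T(w,v)f(v) = \int |(Sf)(u)|^2 d\mu(u)$ where $(Sf)(u) = \int \Phi(w,u) f(w)\, dw$ is a Riesz-type potential. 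The task becomes: bound $\int |Sf|^2 d\mu$ by $\|f\|_2^2$. By duality this is the same as bounding the bilinear form $\int (S f)(u) g(u)\, d\mu(u)$ for $g \in L^2(d\mu)$, and carrying the $u$-integral inside reproduces exactly the three-fold (or $(n{+}1)$-fold) convolution-with-surface-constraint integral whose supremum in $w$ is controlled by $\sup_w \Lambda_{n,\dots}(w)$ from the earlier theorems --- this is where the boundedness results are invoked. Applying Schur's test with the constant test function, the off-diagonal integral $\int T(w,v)\, dv$ is, after the change of variables absorbing $|w|^\rho$, bounded by a constant multiple of $\sup \Lambda_{n,\alpha,\lambda}$; the same holds for $\int T(w,v)\, dw$ by symmetry, giving the operator norm bound and hence \eqref{eq5}.

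The main obstacle is the bookkeeping needed to verify that, for each of the four kernels and in each admissible range of $\alpha$ (and $n$), the exponents really do conspire so that the Schur integral $\int T(w,v)\,dv$ reduces to one of the previously bounded $\Lambda$-type integrals rather than to a divergent one. In particular one must check dilation homogeneity degree zero for $\int T(w,v)\,dv$ after extracting the correct power of $|w|$, and that the residual single-variable integral in $v$ --- of the form $\int |v-u|^{-\lambda}\,dv$ restricted suitably, or rather a Riesz convolution against the $u$-density --- lands in the regime $(n-1)/2 < \alpha < n-1$ (resp.\ $n-1-2/n < \alpha < n-1$, resp.\ the two-sided constraint on $\alpha_{n-1}+\lambda$) where Theorems~\ref{thm-uniform-bd}--\ref{thm5} apply. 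For $J(w,v)$ one additionally uses that $\Lambda_3 = \Delta_3$ and the Step~4 reduction $\Delta_n(w)\le c\,\Delta_2(\bar w)$ to pass to the already-settled two-dimensional integral. Once the exponent matching is confirmed case by case, the remaining steps are the standard Schur-test estimate and are routine; I would present the matching as a short lemma and then dispatch all four kernels uniformly.
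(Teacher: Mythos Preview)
Your factorisation $T(w,v)=\int \Phi(w,u)\,\Phi(v,u)\,d\mu(u)$ with $\Phi(w,u)=|w-u|^{-\gamma}$ and $d\mu(u)=\rho(u)\,du$, where $\rho(u)$ is the push-forward density coming from the surface delta and the Riesz weights, is exactly right, and you are also right that the earlier theorems give $\rho(u)\le C\,|u|^{-\beta}$ with the appropriate $\beta$ (e.g.\ $\beta=\sigma=2\alpha+2-n$ for $H_{n,\alpha}$, $\beta=2$ for $K$, etc.). The gap is in the final step: Schur's test with the constant weight cannot work, because
\[
\int_{\real^n} T(w,v)\,dv \;=\; \int \rho(u)\,|w-u|^{-\gamma}\Big(\int_{\real^n}|v-u|^{-\gamma}\,dv\Big)\,du \;=\;+\infty,
\]
since $|v|^{-\gamma}\notin L^1(\real^n)$ for any $\gamma$ (it fails at infinity when $\gamma<n$ and at the origin when $\gamma\ge n$). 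A weighted Schur test with $h(v)=|v|^{-s}$ does not fix this cleanly either: the Riesz composition $\int |v-u|^{-\gamma}|v|^{-s}\,dv=c\,|u|^{-(\gamma+s-n)}$ inserts an extra power of $|u|$ into the $d\mu$-integral, and the resulting expression is no longer one of the $\Lambda$- or $\Theta$-type integrals that you have already bounded.

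The paper closes the argument with Pitt's inequality (the $L^2$ Stein--Weiss inequality for Riesz potentials) rather than a Schur test. From your own reduction,
\[
\iint f(w)\,T(w,v)\,f(v)\,dw\,dv \;=\;\int \rho(u)\,|I_\gamma f(u)|^2\,du
\;\le\; C\int |u|^{-\beta}\,|I_\gamma f(u)|^2\,du,
\]
where $I_\gamma f(u)=\int |w-u|^{-\gamma}f(w)\,dw$. Pitt's inequality asserts that
\[
\int_{\real^n} |u|^{-\beta}\,|I_\gamma f(u)|^2\,du \;\le\; C\,\|f\|_{L^2}^2
\]
precisely when $\beta=2(n-\gamma)$ and $0<\beta<n$; checking each kernel shows the exponents were chosen to make this identity hold (for instance $\gamma=3n/2-1-\alpha$ and $\beta=2\alpha+2-n$ for $H_{n,\alpha}$ give $2(n-\gamma)=2\alpha+2-n=\beta$, and $(n-1)/2<\alpha<n-1$ forces $0<\beta<n$). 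So the ``exponent bookkeeping'' you anticipated is indeed the heart of the matter, but the inequality that consumes it is Pitt/Stein--Weiss, not Schur.
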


\begin{proof}
Apply Pitt's inequality and the uniform bounds obtained above for 
$\Lambda_n$, $\Lambda_{n,\alpha}$, $\Theta_{n,\alpha}$ and $\Delta_n$.
Here $c$ is a generic constant.
\end{proof}

Practical application for such convolution-type estimates 
has proved to be efficient by replacing the Riesz potentials with
the Fourier transform of  
Bessel potentials (\cite{CP}, \cite{Kirk});		
advantage is achieved by removing local singularities while gaining
integrability on the potential side and improving the range of application 
as ``smoothing operators";
still the lack of homogeneity limits determination of precise dependence 
on parameters in computing best size estimates. 
But as with exact model calculations, the role of Riesz potentials can result in ``very
elegant and useful formulae" that underline intrinsic geometric structure, 
capture essential features
of symmetry and uncertainty, and provide insight to precise lower-order effects.


\end{document}